\newtheorem{Definition}{Definition}[section]
\newtheorem{Theorem}[Definition]{Theorem}
\newtheorem{Lemma}[Definition]{Lemma}
\newcommand{\lc}{\mathcal{L}}
\newcommand{\rc}{\mathcal{R}}
\newcommand{\hc}{\mathcal{H}}
\newcommand{\jc}{\mathcal{J}}
\title{\Large \bf Nil extensions of Clifford ordered semigroup}
\author{A. K. Bhuniya  and K. Hansda \\
\footnotesize{Department of Mathematics, Visva-Bharati
University,}\\
\footnotesize{Santiniketan-731235, West Bengal, India}\\
\footnotesize{anjankbhuniya@gmail.com}, \
\footnotesize{kalyanh4@gmail.com}}
\begin{document}

\maketitle

\begin{abstract}{\footnotesize}
In this paper we describe all those ordered semigroups which are the
nil extension of Clifford, left Clifford, group like, left group
like ordered semigroups.
\end{abstract}
{\it Key Words and phrases:} nil extension,  regular ordered
semigroups, group like, left group like, Clifford  and left Clifford
ordered semigroups.
\\{\it 2010 Mathematics subject Classification:} 20M10; 06F05.

\section{Introduction:}
Nil  extensions of a semigroup (without order), are precisely the
ideal extensions  by a nil semigroup. In 1984, S.
Bogdanovi$\acute{c}$ and S. Mili$\acute{c}$ \cite{bc1} characterized
the semigroups (without order) which are nil extensions of
completely simple semigroups, where as, a similar work was done by
J. L Galbiati and M.L Veronesi \cite{GV} in 1980. Authors like S.
Bogdanovi$\acute{c}$, M. Ciri$\acute{c}$, Beograd have investigated
this type extensions for regular semigroup, group, periodic
semigroup as well as completely regular semigroup(see \cite{bc2},
\cite{bco2}).

The notion of ideal extensions in ordered semigroups is actually
introduced  by N. Kehayopulu and M. Tsingelis \cite{ke2003}. In
\cite{Ke2009}, they have worked on ordered semigroups which are nil
extensions of Archimedean ordered semigroups. The concepts of nil
extensions have been extended to ordered semigroups by Y. Cao
\cite{Cao 2000},  with characterizing ordered semigroups which are
nil extensions of simple, left simple, t-simple  ordered semigroups.
Further he described complete semilattices of nil extensions of such
ordered semigroups.

The aim of this paper is to describe all those ordered semigroups
which are  the nil extension of Clifford, left Clifford,  group
like, left group like ordered semigroups. This research originates
from the  research  papers \cite{bc2}, \cite{bco2}.

Our paper organized as follows. The basic definitions and properties
of ordered semigroups  are presented in Section 2. Section 3 is
devoted to characterizing the nil extensions of Clifford and left
Clifford ordered semigroups.

\section{Preliminaries:}
In this paper $\mathbb{N}$ will provide the set of all natural
numbers. An ordered semigroup is a partiality ordered set $S$, and
at the same time a semigroup $(S,\cdot)$ such that $(
\textrm{forall} \;a , \;b , \;x \in S ) \;a \leq b \Rightarrow
\;xa\leq xb \;\textrm{and} \;a x \leq b x$. It is denoted by
$(S,\cdot, \leq)$. For an ordered semigroup $S$ and $H \subseteq S$,
denote $(H] := \{t \in S : t \leq  h, \;\textrm{for  some} \;h\in
H\}$. An element $z$ in $S$ is called zero of $S$ if $z < x$ and
$zx= xz=z$ for every $x \in S$.

Let $I$ be a nonempty subset of an ordered semigroup $S$. $I$ is a
left (right) ideal of $S$, if $SI \subseteq I \;( I S \subseteq I)$
and $(I]= I$. $I$ is an ideal of $S$ if $I$ is both a left and a
right ideal of $S$. An (left, right) ideal $I \;of S$ is proper if
$I \neq S$. The intersection of all ideals of an ordered semigroup
$S$, if nonempty, is called the kernel of $S$ and is denoted by
$K(S)$.

An ordered semigroup $S$ is called a group like ordered semigroup if
for all $a, b \in S \;\textrm{there are} \;x, y \in S \;\textrm{such
\;that} \;a \leq xb \;\textrm{and} \;a \leq by$ \cite{bh1} and  $S$
left group like  if for all $a, b \in S \;\textrm{there is} \;x \in
S$ such that $a \leq xb$ \cite{bh1}. Kehayopulu \cite{Ke2006}
defined Greens relations on an ordered semigroup $S$ as follows: $ a
\lc b \; if   \;L(a)= L(b),
  \;a \rc b   \; \textrm{if}   \;R(a)= R(b),  \;a \jc b   \; \textrm{if}   \;I(a)= I(b), \;\textrm{and} \;\hc= \;\lc \cap \;\rc$.
These four relations $\lc, \;\rc, \;\jc \;\textrm{and} \;\hc$ are
equivalence relations. In an  ordered semigroup $S$, an equivalence
relation $\rho$ is called left (right) congruence if for $a, b, c
\in S \;a \;\rho \;b \; \textrm{implies} \;ca \;\rho \;cb \;(ac
\;\rho \;bc)$. $\rho$ is congruence if it is both left and right
congruence. A congruence $\rho$ on $S$ is called semilattice if  for
all $a, b \in S \;a \;\rho \;a^{2} \;\textrm{and} \;ab \;\rho \;ba$.
A semilattice congruence $\rho$ on $S$ is called complete if $a \leq
b$ implies $(a, ab)\in \rho$. The ordered semigroup $S$  is called
complete semilattice of subsemigroup of type $\tau$ if there exists
a complete semilattice congruence $\rho $ such that $(x)_{\rho}$ is
a type $\tau$ subsemigroup of $S$. Equivalently: There exists a
semilattice $Y$ and a family of subsemigroups $\{S\}_{\alpha \in Y}$
of type $\tau$ of $S$ such that:
\begin{enumerate}
\item \vspace{-.4cm}
$S_{\alpha}\cap S_{\beta}= \;\phi$ for any $\alpha, \;\beta \in \;Y
\;\textrm{with} \; \alpha \neq \beta,$
\item \vspace{-.4cm}
$S=\bigcup _{\alpha \;\in \;Y} \;S_{\alpha},$
\item \vspace{-.4cm}
$S_{\alpha}S_{\beta} \;\subseteq \;S_{\alpha \;\beta}$ for any
$\alpha, \;\beta \in \;Y,$
\item \vspace{-.4cm}
$S_{\beta}\cap (S_{\alpha}]\neq \phi$ implies $\beta \;\preceq
\;\alpha,$ where $\preceq$ is the order of the semilattice $Y$
defined by \\$\preceq:=\{(\alpha,\;\beta)\;\mid
\;\alpha=\alpha\;\beta\;(\beta\;\alpha)\}$ \cite{ke1}.
\end{enumerate}

Let $S$ be an ordered semigroup with the zero $0$.  An element $a
\in S$ is called a nilpotent if $a^n= 0$ for some $n \in
\mathbb{N}$. The set of all nilpotents of $S$ is denoted by
$Nil(S)$. $S$ is called nil ordered semigroup (nilpotent) \cite{Cao
2000} if $S = Nil(S)$.

Due to Cao \cite{Cao 2000} the definition of nil extension of
ordered semigroup is as follows.
\begin{Definition}\cite{Cao 2000}
Let $I$ be an ideal of an ordered semigroup $S$. Then $(S/I, \cdot,
\preceq)$ is called the Rees factor ordered semigroup of $S \;modulo
\;I$, and $S$ is called an ideal extension of $I$ by ordered
semigroup $S/I$. An ideal extension $S \;of \;I$ is called a
nil-extension of $I$ if $S/I$ is a nil ordered semigroup.
\end{Definition}

\begin{Lemma}\cite{Cao 2000}
 Let $S$ be an ordered semigroup and $I$ an ideal of $S$. Then
the following are equivalent:
\begin{itemize}
\item[(i)] $S$ is a nil-extension of $I$;
\item[(ii)] $(\forall a\in S)(\exists m \in \mathbb{N}) \;a^m \in I$.
\end{itemize}
\end{Lemma}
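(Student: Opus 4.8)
The plan is to unwind the definition of the Rees factor ordered semigroup $S/I$ together with the definition of a nil ordered semigroup, after which both implications become essentially bookkeeping. Recall from Cao \cite{Cao 2000} that, as a set, $S/I$ may be identified with $(S\setminus I)\cup\{0\}$, where $0$ is the class of $I$; the product of two classes $\bar a,\bar b\in S/I$ is $\overline{ab}$, which equals $0$ precisely when $ab\in I$; and $0$ is the zero of $(S/I,\cdot,\preceq)$. In particular, for $a\in S$ the power $\bar a^{\,m}$ computed in $S/I$ coincides with $\overline{a^{m}}$, so that $\bar a^{\,m}=0$ in $S/I$ if and only if $a^{m}\in I$ in $S$. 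I would record this identification first and then treat the two directions separately.

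For (i) $\Rightarrow$ (ii): assume $S/I$ is a nil ordered semigroup and fix $a\in S$. If $a\in I$ then already $a^{1}\in I$, so suppose $a\notin I$. Then $\bar a\in S/I=Nil(S/I)$, hence $\bar a^{\,m}=0$ for some $m\in\mathbb{N}$, and by the remark above this means $a^{m}\in I$, as required.

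For (ii) $\Rightarrow$ (i): take an arbitrary element $\bar x\in S/I$. If $\bar x=0$ it is trivially nilpotent. Otherwise $\bar x=\bar a$ for some $a\in S\setminus I$; applying (ii) to $a$ yields $m\in\mathbb{N}$ with $a^{m}\in I$, whence $\bar x^{\,m}=\overline{a^{m}}=0$ in $S/I$. Thus every element of $S/I$ is nilpotent, i.e. $S/I=Nil(S/I)$, which is exactly the assertion that $S/I$ is a nil ordered semigroup and therefore that $S$ is a nil-extension of $I$.

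The only point requiring a line of care — and the closest thing to an obstacle here — is the back-and-forth passage between ``$a^{m}\in I$'' and ``$\bar a$ is nilpotent in $S/I$''. One should observe that since $I$ is an ideal, $a^{m}\in I$ forces $a^{m+k}\in I$ for every $k\geq 0$, so differing exponents cause no inconsistency, and that the order $\preceq$ on $S/I$ plays no role, since nilpotency is a purely multiplicative notion involving only the absorbing zero. With these remarks in place the equivalence is just a translation between the two formulations.
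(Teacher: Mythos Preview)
Your argument is correct. Note, however, that the paper does not supply its own proof of this lemma: it is quoted from Cao \cite{Cao 2000} and stated without demonstration, so there is nothing to compare against. What you wrote is exactly the intended (and essentially the only) proof---unwinding the Rees factor construction so that ``$\bar a^{\,m}=0$ in $S/I$'' becomes ``$a^{m}\in I$''---and your closing remarks about the order playing no role and the ideal absorbing higher powers are the right sanity checks.
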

In {\cite{bh1}, we have  introduced the notion of Clifford and left
Clifford ordered semigroups and characterized their structural
representation. For the correspondences  of the results of this
paper we are  stating some of them.

\begin{Definition}\cite{bh1}
A regular ordered semigroup $S$ is called  Clifford ordered (left
Clifford ordered ) semigroup if for all $a, b  \in S$ and $ab
\in(bSa] \;(ab \in (Sa])$.
\end{Definition}
\begin{Theorem}\cite{bh1}\label{Clifford}
Let $S$ be a regular ordered semigroup. Then followings hold in $S$:
\begin{enumerate}
 \item \vspace{-.4cm}
 $S$ is Clifford if and only if $\lc = \rc$.
  \item \vspace{-.4cm}
  $\lc$ is a complete semilattice congruence if $S$ is Clifford.
   \item \vspace{-.4cm}
$S$ is Clifford ordered semigroup if and only if it is a complete
semilattice of  group like ordered semigroups.
\end{enumerate}
\end{Theorem}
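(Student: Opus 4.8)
The plan is to prove statement (1) first and then bootstrap (2) and (3) from it; throughout I use the general facts that $\lc$ is a right congruence, $\rc$ a left congruence, and that in a regular ordered semigroup $L(a)=(Sa]$ and $R(a)=(aS]$. For the forward direction of (1), if $S$ is Clifford then applying $ab\in(bSa]$ to the pairs $(s,a)$ and $(a,s)$ gives $sa\in(aSs]\subseteq(aS]$ and $as\in(sSa]\subseteq(Sa]$ for all $s\in S$, so $Sa\subseteq(aS]$ and $aS\subseteq(Sa]$, hence $(Sa]=(aS]$ for every $a$; combined with $L(a)=(Sa]$ and $R(a)=(aS]$ this says $L(a)=R(a)$ for all $a$, that is $\lc=\rc$. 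For the converse, assume $S$ is regular with $\lc=\rc$, so that $\rho:=\lc=\rc$ is a congruence; the argument has two technical steps. First, $(Sa]=(aS]$ for every $a$: fix $x$ with $a\le axa$, note $ax\,\rc\,a$ and $xa\,\lc\,a$, so by $\lc=\rc$ also $ax\,\lc\,a$ and $xa\,\rc\,a$, giving $ax\le ua$ and $xa\le av$ for some $u,v\in S$; substituting these into $a\le axa$ yields $a\le ua^{2}$ and $a\le a^{2}v$ (which already gives $a\,\rho\,a^{2}$), and iterating this positional manipulation produces $(Sa]=(aS]$. Second, one deduces from $(Sa]=(aS]$ and regularity the Clifford inequality $ab\in(bSa]$, so $S$ is Clifford.

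For (2), let $S$ be Clifford; by (1) we have $\lc=\rc$, $(Sa]=(aS]$, and $L(x)=R(x)$ for every $x$. Then $\rho:=\lc=\rc$ is a semilattice congruence: $a\,\rho\,a^{2}$ as in the proof of (1), and $ab\,\rho\,ba$ because $ab\le bsa$ (the Clifford condition) and $sa\le at$ (from $(Sa]=(aS]$) force $ab\le(ba)t$, so $ab\in R(ba)=L(ba)$, and symmetrically. It remains to show $\rho$ is complete: if $a\le b$ then $a^{2}\le ab$, and since $a\,\lc\,a^{2}$ this gives $L(a)=L(a^{2})\subseteq L(ab)$; on the other hand $ab\in(bSa]\subseteq(Sa]\subseteq L(a)$ gives $L(ab)\subseteq L(a)$. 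Hence $L(ab)=L(a)$, that is $(a,ab)\in\rho$.

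For (3), first the ``if'' direction: let $S$ be a complete semilattice $Y$ of group-like subsemigroups $\{S_{\alpha}\}_{\alpha\in Y}$. Then $S$ is regular, since $a\in S_{\alpha}$ gives $a\in(aS_{\alpha}a]\subseteq(aSa]$; and for the Clifford inequality, given $a\in S_{\alpha}$ and $b\in S_{\beta}$ the products $ab$ and $ba$ both lie in $S_{\alpha\beta}$, inside which the group-like property provides $y\in S_{\alpha\beta}$ with $ab\le(ba)y$ and then $z\in S_{\alpha\beta}$ with $y\le z(ba)$, so $ab\le(ba)z(ba)=b(azb)a\in(bSa]$. For the ``only if'' direction, let $S$ be Clifford; by (1) and (2), $\rho:=\lc=\rc$ is a complete semilattice congruence, so take $Y=S/\rho$ and let the $S_{\alpha}$ be the $\rho$-classes. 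Each $S_{\alpha}$ is a subsemigroup (congruence together with $a\,\rho\,a^{2}$), and each is group-like: for $a,b\in S_{\alpha}$ we have $a\,\lc\,b$, so $a\in L(b)=(Sb]$ and $a\le sb$ for some $s\in S$; multiplying $a\le axa$ on the right by $a\le sb$ gives $a\le(axs)b$, and condition (4) of the complete semilattice congruence forces $axs\in S_{\alpha}$, with the symmetric argument on the other side. Since $\rho$ is a complete semilattice congruence, the family $\{S_{\alpha}\}$ satisfies (1)--(4), so $S$ is a complete semilattice of group-like ordered semigroups.

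The step I expect to be the real obstacle is the converse of (1) --- equivalently, showing that each $\lc$-class of a Clifford ordered semigroup is group-like. Getting $a\,\rho\,a^{2}$ is routine, but proving $(Sa]=(aS]$ from ``regular and $\lc=\rc$'', then extracting the Clifford inequality from it, and producing group-like witnesses inside the prescribed $\lc$-class, all require patiently combining regularity, the identity $\lc=\rc$, and careful positional bookkeeping with the order inequalities; everything else is routine.
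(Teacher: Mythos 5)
First, a caveat: the paper states this theorem only as a citation to \cite{bh1} and contains no proof of it, so there is nothing internal to compare your argument against; I can only judge it on its own terms.

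Your forward direction of (1), your proof of (2), and both directions of (3) are essentially sound, and, importantly, they only ever invoke the \emph{forward} implication of (1) (Clifford $\Rightarrow$ $(Sa]=(aS]$ and $\lc=\rc$), which you do establish correctly. The genuine gap is exactly where you yourself predicted it: the converse of (1). Your reduction of the Clifford inequality to the identity $(Sa]=(aS]$ is fine (from $ab\le bt$, $ab\le ua$ and $ab\le ab\,v\,ab$ one gets $ab\le bt\,v\,ua\in bSa$), but the claim that regularity plus $\lc=\rc$ yields $(Sa]=(aS]$ is left as ``iterating this positional manipulation,'' and the manipulation you actually carry out never leaves the orbit of $a$ and its regularity witness $x$: it produces $a\le ua^{2}$ and $a\le a^{2}v$, hence $a\,\rho\,a^{2}$, but gives no purchase on $sa$ for an \emph{arbitrary} $s\in S$. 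The obstruction is that $\lc=\rc$ is an equality of equivalence relations, not of the assignments $a\mapsto L(a)$ and $a\mapsto R(a)$; from $sa\in L(a)$, i.e.\ the \emph{inclusion} $L(sa)\subseteq L(a)$, the hypothesis $\lc=\rc$ lets you transfer nothing to the $\rc$-side, since it only speaks about equalities of principal ideals. In the unordered theory the corresponding step ($Sa\subseteq aS$) is obtained by showing each $\hc$-class contains an idempotent and is a group, deducing that idempotents are central, and then writing $sa=s(aa'a)=(aa')sa\in aS$; none of that idempotent machinery exists in an ordered semigroup, so an order-theoretic substitute must be built from scratch, and that construction is the actual mathematical content of the converse of (1). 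Until it is supplied, part (1) is only half proved, although parts (2) and (3) stand independently of it.
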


\begin{Theorem}\cite{bh1}\label{left Clifford}
Let $S$ be a regular ordered semigroup. Then followings hold in $S$:
\begin{enumerate}
 \item \vspace{-.4cm}
  $\lc$ is a complete semilattice congruence if $S$ is left  Clifford.
   \item \vspace{-.4cm}
$S$ is  left Clifford ordered semigroup if and only if it is a
complete semilattice of left group like ordered semigroups.
\end{enumerate}
\end{Theorem}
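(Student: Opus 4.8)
The plan is to prove part (1) first --- that $\lc$ is a complete semilattice congruence on a left Clifford ordered semigroup --- and then to read both directions of part (2) off it, in exact parallel with Theorem~\ref{Clifford} but with $\rc$ dropped and ``group like'' weakened to ``left group like''. Throughout I would use that regularity of $S$ gives $a\in(aSa]\subseteq(Sa]$, hence $L(a)=(Sa]$ for every $a$, so that $a\,\lc\,b$ means exactly $(Sa]=(Sb]$.

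For (1): $\lc$ is a right congruence on any ordered semigroup, so the point is that it is a left congruence. Given $a\,\lc\,b$ and $c\in S$, I would use regularity to write $a\in(Sa]=(Sb]$, i.e.\ $a\le sb$, then apply the left Clifford property to the pair $(c,s)$ to obtain $cs\le tc$; this gives $ca\le csb\le tcb\in S(cb)$, so $ca\in(Scb]=L(cb)$, whence $L(ca)\subseteq L(cb)$, and symmetrically $L(ca)=L(cb)$. That $\lc$ is then a semilattice congruence rests on two short computations: $a\le axa$ together with $ax\le sa$ (the left Clifford property on $(a,x)$) gives $a\le sa^2$, so $L(a)=L(a^2)$; and for $ab\,\lc\,ba$ I would run $L(ba)=L((ba)^2)\subseteq L(aba)\subseteq L(ab)$, where the first equality is the $a\,\lc\,a^2$ fact applied to $ba$, the middle inclusion holds because $baba=b(aba)\in S(aba)$, and the last because $aba=(ab)a\in(Sab]$ by the left Clifford property; interchanging $a$ and $b$ then gives $L(ab)\subseteq L(ba)$, so $L(ab)=L(ba)$. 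Completeness is of the same flavour: if $a\le b$ then $ab\in(Sa]$ forces $(Sab]\subseteq(Sa]$, while $a\le axa\le axb\le sab$ (using $a\le b$ and again $ax\le sa$) forces $(Sa]\subseteq(Sab]$, so $a\,\lc\,ab$.

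For (2): in the forward direction I would take $Y=S/\lc$ with its induced semilattice structure and let the $\lc$-classes $S_\alpha$ be the blocks; conditions (1)--(4) of a complete semilattice of subsemigroups are then precisely the statement that $\lc$ is a complete semilattice congruence. To see each $S_\alpha$ is left group like, take $a,b\in S_\alpha$, write $a\le sb$, choose regularity witnesses $a\le aa'a$ and $b\le bb'b$, and put $x=aa'sbb'$: one gets $a\le xb$, and chasing $\lc$-classes through conditions (3)--(4) (one checks $aa',bb'\in S_\alpha$ and that the class of $s$ lies above $\alpha$) shows $x\in S_\alpha$. For the converse, let $S=\bigcup_{\alpha\in Y}S_\alpha$ be a complete semilattice of left group like ordered semigroups. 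Then $S$ is regular, since each block is regular (a property of left group like ordered semigroups recorded in \cite{bh1}) and $S_\alpha S_\alpha\subseteq S_\alpha$; and for $a\in S_\alpha$, $b\in S_\beta$ we have $ab\in S_{\alpha\beta}$, so picking any $w\in S_{\alpha\beta}$ and putting $v=wa\in S_{\alpha\beta}$, the fact that $S_{\alpha\beta}$ is left group like yields $x\in S_{\alpha\beta}$ with $ab\le xv=(xw)a\in Sa$, i.e.\ $ab\in(Sa]$. Hence $S$ is left Clifford.

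I expect the only genuine insight to be the repeated, well-placed use of the single identity ``$xy\in(Sx]$'' in part (1) --- most tellingly, spotting that $aba=(ab)a\in(Sab]$ is what links $L(aba)$ to $L(ab)$ in the proof of $ab\,\lc\,ba$, and likewise the $cs\le tc$ and $ax\le sa$ substitutions; once these are found, the rest of (1) is forced. What is left is bookkeeping rather than difficulty: in the forward direction of (2) one must track the $\lc$-classes of a handful of auxiliary products to see that the multiplier $x$ really lies in $S_\alpha$, and in the converse one leans on \cite{bh1} for the regularity of left group like ordered semigroups --- the most delicate point, since it turns on the precise definition of ``left group like'' adopted there rather than on the bare condition $a\le xb$.
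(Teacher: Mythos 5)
This theorem is imported into the paper from \cite{bh1} (``Communicated'') and no proof of it appears in the present manuscript, so there is no in-paper argument to compare yours against; I can only assess your proposal on its own terms, and it is sound. The two computations you isolate as the crux --- $cs\le tc$ to upgrade the right congruence $\lc$ to a left congruence, and $aba=(ab)a\in(Sab]$ to get $ab\,\lc\,ba$ via $L(ba)=L((ba)^2)\subseteq L(aba)\subseteq L(ab)$ --- are exactly the right uses of the defining identity $xy\in(Sx]$, and the completeness step ($ab\in(Sa]$ one way, $a\le axa\le sab$ the other) is correct. The class-chasing in the forward direction of (2) also checks out: from $a\le aa'a$, $b\le bb'b$ and $a\le sb$, completeness of $\lc$ gives $\alpha[a']=\alpha[s]=\alpha[b']=\alpha$, hence $[aa'sbb']=\alpha$. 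The one point you rightly flag as delicate is the converse of (2): from the bare condition ``for all $a,b$ there is $x$ with $a\le xb$'' regularity of a block does \emph{not} follow, so the argument genuinely needs the convention (consistent with the paper's Lemma \ref{LGO}, which with $b=a$ gives $a\in(aSa^2]\subseteq(aSa]$) that left group like ordered semigroups are regular by definition or by that characterization; with that convention in force your proof is complete, and it runs exactly parallel to the paper's Theorem \ref{Clifford} with $\rc$ discarded, which is surely how \cite{bh1} proves it as well.
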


\section{Main Results:}
Now we describe all those ordered semigroups which are the nil
extensions of Clifford, left Clifford, group like, left group like
ordered semigroups. We omit the proof of the following lemma as it
is straightforward.
\begin{Lemma}\label{LGO}
An ordered semigroup $S$ is  left group like ordered semigroup if
and only if $ a \in(aSab]$ if for all $a, \;b \in S$.
\end{Lemma}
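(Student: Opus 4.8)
The plan is to prove the two implications separately, both resting on the elementary reformulation obtained by unravelling the definition of a left group like ordered semigroup: $S$ is left group like if and only if $(Sc] = S$ for every $c \in S$. Indeed, the defining condition ``for all $a, b \in S$ there is $x \in S$ with $a \le xb$'' says precisely that, for each fixed $b$, every element of $S$ lies in $(Sb]$.

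For the direction ``$S$ left group like $\Rightarrow$ $a \in (aSab]$'', fix $a, b \in S$. Applying the reformulation to $c = ab$ produces $p \in S$ with $a \le p(ab)$, so that $a \in (Sab]$. The remaining task is to upgrade this to a lower bound of the required shape $a \le a\,s\,(ab)$, i.e. to produce the outer left factor $a$. My intention is to invoke the definition a second time — for instance on the pair $(a,a)$, giving $u \in S$ with $a \le ua$ — and then to combine $a \le ua$ with $a \le p(ab)$, using associativity together with the compatibility of $\le$ with multiplication, so as to rewrite the bound in the form $a \le a\,s\,a\,b$ for a suitable $s \in S$; this is exactly $a \in (aSab]$.

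For the converse, assume $a \in (aSab]$ for all $a, b \in S$. Since $S$ is closed under multiplication we have $aSa \subseteq S$, and by associativity $aSab = (aSa)b \subseteq Sb$; as the operator $H \mapsto (H]$ is monotone with respect to inclusion, $a \in (aSab] \subseteq (Sb]$. Because $a$ and $b$ were arbitrary, $(Sb] = S$ for every $b \in S$, and by the reformulation of the first paragraph $S$ is left group like.

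I expect the only genuine obstacle to be the ``produce the outer factor $a$'' step in the forward direction: the definition of left group like furnishes only one-sided, left-multiplicative lower bounds, so coaxing an $a$ to the extreme left of the word $s\,a\,b$ is the single non-formal point, whereas everything else is routine bookkeeping with the closure operator $H \mapsto (H]$, associativity, and the compatibility of the order with the multiplication.
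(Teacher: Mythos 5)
Your converse direction is essentially correct, and the reformulation of left group likeness as $(Sb]=S$ for every $b$ is the right way to read the definition. The genuine gap is exactly the step you yourself flag in the forward direction, and it is not closable by the method you propose. Combining $a\le ua$ with $a\le p(ab)$ by monotonicity and substitution only ever produces bounds of the form $a\le w a$ or $a\le w ab$ where $w$ is a word whose leftmost letter you cannot control: left simplicity only prepends letters on the left, it never forces the bounding word to begin with the specific element $a$. In fact the implication is false from the literal definition as stated. Take the left--right dual of a Baer--Levi semigroup with the trivial order: it is left simple (hence ``left group like'' in the literal sense), left cancellative, and idempotent-free; if $a=asab$ held, then putting $e=sab$ we would have $ae=aee$, hence $e=e^{2}$ by left cancellation, contradicting the absence of idempotents. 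So there $a\notin(aSab]$.

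What is missing is regularity, which the authors implicitly build into the notion of a left group like ordered semigroup (in the proof of Theorem \ref{ne6} they invoke the ``Regularity of $K$'' for a left group like $K$; note that for the two-sided group like notion regularity does follow from the definition, via $a\le ay$ and $y\le za$, but for the one-sided version it does not). Once regularity is available, the forward direction is a two-line argument: choose $x$ with $a\le axa$, then by left simplicity choose $s$ with $xa\le s(ab)$, and conclude $a\le a(xa)\le a\,s\,ab$, i.e.\ $a\in(aSab]$. Correspondingly, in your converse you should also record that taking $b=a$ in $a\le asab$ gives $a\le a(sa)a$, so the hypothesis recovers regularity as well as left simplicity.
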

\begin{Theorem}\label{ne6}
An ordered semigroup $S$ is a nil extension of a left group like
ordered semigroup if and only if  for every  $a,b \in S,  \;there
\;exists \;n \in \mathbb{N} \;such \;that \;a^{n} \in (a^{n}Sa^{n}b]
$ and for every  $ a\in S, \;b\in Reg_\leq(S), \;a\leq ba$ implies
$a\leq axb \;for \;some \;x\in S $.
\end{Theorem}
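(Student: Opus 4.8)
\noindent The strategy is to take the ideal in question to be $I:=Reg_\leq(S)=\{a\in S: a\in(aSa]\}$. In the converse direction I will show that the two displayed conditions force $I$ to be an ideal of $S$, force $(I,\cdot,\leq)$ to be a left group like ordered semigroup, and guarantee that every element of $S$ has a power in $I$; by the characterisation of nil extensions of Cao recalled above this says precisely that $S$ is a nil extension of the left group like ordered semigroup $I$. In the direct direction the two conditions will fall out of Lemma \ref{LGO} applied inside $I$.

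\emph{Direct implication.} Let $S$ be a nil extension of a left group like ideal $J$. One first checks $I=J$: the inclusion $J\subseteq I$ holds because a left group like ordered semigroup is regular (Lemma \ref{LGO} with $b=a$ gives $a\in(aSa^{2}]\subseteq(aSa]$), while for $I\subseteq J$, if $a\leq aua$ then $au\leq(au)^{k}$ for all $k$, so $au\leq(au)^{k}\in J$ for a suitable $k$, hence $au\in J$ since $J=(J]$, and then $a\leq(au)a\in JS\subseteq J$. Now fix $a,b\in S$ and pick $m$ with $a^{m}\in J$; then $a^{m}b\in J$, and applying Lemma \ref{LGO} inside $J$ to the pair $a^{m},a^{m}b$ (legitimate because $J$ is downward closed) gives $i\in J$ with $a^{m}\leq a^{m}ia^{m}(a^{m}b)=a^{m}ia^{2m}b$; multiplying on the left by $a^{m}$ yields $a^{2m}\leq a^{2m}ia^{2m}b$, so the first condition holds with $n=2m$. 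For the second, if $a\in S$, $b\in I=J$ and $a\leq ba$, then $ba\in JS\subseteq J$ forces $a\in J$, and Lemma \ref{LGO} applied in $J$ to $a,b$ gives $i\in J$ with $a\leq aiab=a(ia)b$, i.e. $a\leq axb$ with $x=ia$.

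\emph{Converse implication.} Assume the two conditions. Applying the first to the pair $(a,a)$ produces $n$ and $x$ with $a^{n}\leq a^{n}xa^{n}a=a^{n}(xa)a^{n}$, so $a^{n}\in I$; thus $I\neq\emptyset$ and every element of $S$ has a power in $I$. It remains to prove that $I$ is an ideal of $S$ and that $(I,\cdot,\leq)$ is left group like, after which $S/I$ is nil by Cao's lemma and we are done. For the left group like property one takes $a,b\in I$ and seeks $x\in I$ with $a\leq xb$: the first condition gives $a^{n}\leq(a^{n}ta^{n})b$ for some $n$, so it suffices to pass from the power $a^{n}$ back to $a$, i.e. to find $c$ with $a\leq ca^{n}$; this uses the regularity of $a$ and the fact that $I$ has no proper left ideal, while the second condition is what guarantees that this one-sided information assembles into the genuinely left group like form (rather than some weaker right-handed analogue) and, by the same token, that $I$ is closed under the $S$-action and downward closed, hence an ideal.

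\emph{Where the difficulty lies.} The direct implication is routine once $I=J$ has been identified. The whole substance of the theorem is the converse, namely the verification that $Reg_\leq(S)$ is an ideal which, as an ordered semigroup, is left group like. The delicate point is that the first condition supplies, for each pair $(a,b)$, only \emph{some} exponent $n$, so one must show that a high enough power of $a$ already lies in $I$ and that the descent from $a^{n}$ back to $a$ — together with the passage from one-sided to two-sided divisibility afforded by the second condition — can be organised uniformly; controlling this interplay of exponents is the main obstacle.
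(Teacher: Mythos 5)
Your direct implication is sound and essentially the paper's argument: you identify $Reg_\leq(S)$ with the left group like ideal $K$, and then both displayed conditions drop out of Lemma \ref{LGO} applied inside $K$ (your detour through $n=2m$ is harmless; $n=m$ with $x=ia^{m}$ already works). The problem is the converse, which you yourself flag as ``the whole substance of the theorem'': there you have written a plan, not a proof. You assert that $I=Reg_\leq(S)$ is ``closed under the $S$-action and downward closed, hence an ideal'' and that the left group like property follows, but none of the required verifications is carried out. In the paper each of these is a separate, nontrivial computation: closure under left multiplication ($sa\in T$) uses the first condition applied to the pair $(xa,sa)$; closure under right multiplication ($as\in T$) needs the second condition applied to $as\leq (ax)^{m}as$ together with a further application of the first condition; downward closure needs both conditions again; and these closure properties must be established \emph{before} the left simplicity argument, because that argument needs to know that $c(vc)^{m}t(vc)^{m}$ lands back in $T$.

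Two points in your sketch are not merely incomplete but problematic. First, you invoke ``the fact that $I$ has no proper left ideal'' to establish $a\leq xb$ for $a,b\in I$ --- but that fact is the left simplicity of $I$, i.e.\ precisely what is being proved at that moment; the argument as stated is circular. Second, your proposed route for left simplicity (apply the first condition to $(a,b)$ to get $a^{n}\leq a^{n}ta^{n}b$ and then ``descend'' from $a^{n}$ to $a$ by finding $c$ with $a\leq ca^{n}$) faces a real obstacle: regularity of $a$ gives $a\leq (ax)^{n}a$, i.e.\ $a\leq ca$, but not $a\leq ca^{n}$ for $n>1$, and there is no evident way to manufacture the higher power on the right. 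The paper sidesteps this entirely by never taking powers of $a$ itself: from $c\leq c(vc)^{n}$ it applies the first condition to the pair $(vc,d)$, so the exponent sits on $vc$ and substitutes back cleanly. You would need to adopt that device (or something equivalent) for the converse to go through.
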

\begin{proof}
Suppose that $S$ is a nil extension of a left group like ordered
semigroup $K$ and $a,b \in S$. Then there is $m \in \mathbb{N}$ such
that $a^{m} \in K$.  Regularity of $ K$ implies  that $a^{m}\leq
a^{m}xa^{m}$ for some $x \in K$. Further, for $xa^{m},a^{m}b \in K$;
the left simplicity of $K$ yields that $xa^{m} \leq ya^{m}b,
\;\textrm{for some} \;y \in K$.  This  gives that $a^m \leq
a^{m}xa^{m} \leq a^{m}ya^{m}b$.  Next let $b \in Reg_\leq(S)$ and
$a\in S$ such that $a\leq ba$. Since $b \in Reg_\leq(S),
\;\textrm{there exists} \;z \in S$ such that $b\leq b(zb)^{n}
\;\textrm{for all} \;n \in \mathbb{N}$. Then for some $n_{1}\in
\mathbb{N}$,  $(zb)^{n_{1}} \in K$; whence $b(zb)^{n_{1}} \in K$.
Thus $b\in K$ and so  $ba\in K$. Thus  $a\in K$. Since $K$ is a left
group like ordered semigroup, for $a,ba\in K$ it follows that $a\leq
asba$ for some $s \in K$, by Lemma \ref{LGO}. Thus the given
conditions follow.

Conversely,  assume that given conditions hold in $S$. Let $a\in S$
be arbitrary. Then by  given condition we have $a^{m} \leq
a^{m}xa^{m+1}$, for some $x\in S$ and $m\in\mathbb{N}$. This implies
$a^{m+1} \in Reg_\leq(S)$ and so $Reg_\leq(S)\neq \phi$. Denote $T=
Reg_\leq(S)$.

Let us choose  $s\in S \;\textrm{and} \;a\in T$. Then the definition
of $T$ implies
$$a \leq a(xa)^{n}, \;\textrm{for all} \;n \in \mathbb{N}
\;\textrm{and some }  \;x \in S.$$ Thus  $sa \leq sa(xa)^{n},
\;\textrm{for all} \;n\in \mathbb{N}$.  Now  for $xa, sa \in S,
\;\textrm{there exists} \;m_{1} \in \mathbb{N}$ and $t_{1}\in S$
such that
$$(xa)^{m_{1}}\leq (xa)^{m_{1}}t_{1}(xa)^{m_{1}}sa, \;\textrm{by the first condition}.$$
Then $a \leq a(xa)^{m_1}$ implies $sa \leq sa(xa)^{m_1}$ and hence
$sa\leq sa(xa)^{m_{1}}t_{1}(xa)^{m_{1}}sa$; where
$(xa)^{m_{1}}t_{1}(xa)^{m_{1}}\in S$. So $sa \in T$. Also   $a \leq
(ax)^{n}a \;\textrm{for all} \;n\in\mathbb{N}$. Let $m_{2}\in
\mathbb{N}$ be such that $(ax)^{m_{2}} \in T$. Then  $as \leq
(ax)^{m_2}as$ implies by the  second  condition  that
\begin{equation}\label{eq5.1}
as \leq ast_{2}(ax)^{m_{2}} \;\textrm{for some} \;t_{2}\in S.
\end{equation}
Denote  $t= (ax)^{m_{2}}$.  Then the definition of $T$ implies
\begin{equation}\label{eq5.2}
t\leq t(zt)^{n} \;\textrm{for all} \;n \in \mathbb{N} \;\textrm{for
some} \;\;z\in T.
\end{equation}
Then  using the first  condition for  $zt, as \in S$ we have that
$(zt)^{m_{3}} \leq (zt)^{m_{3}}t_{3}(zt)^{m_{3}}as\\ \;\textrm{for
some} \;m_{3} \in \mathbb{N} \;\textrm{and} \;t_3 \in S$. That is $t
\leq t(zt)^{m_{3}}t_{3}(zt)^{m_{3}}as$, by   (\ref{eq5.2}). So from
(\ref{eq5.1}) we have $as \leq
ast_2t(zt)^{m_{3}}t_{3}(zt)^{m_{3}}as$, and hence  $as \in T$.

Next choose  $a\in S \;\textrm{and} \;b\in T$ such that $a\leq b$.
Since $b \in T$ there is $x \in S$ such that $b \leq b (xb)^n$ for
all $ n \in \mathbb{N}$. Now for $xb, a \in S$, it follows from the
first  condition that
\begin{equation}\label{eq5.3}
(xb)^{m_4}\leq (xb)^{m_4} t_4 (xb)^{m_4} a \;\textrm{for some }
\;m_4 \in \mathbb{N} \;\textrm{and}  \;t_4 \in S.
\end{equation}
So $a\leq b$ implies that
\begin{align*}
 a&\leq b(xb)^{m_4} t_4 (xb)^{m_4} a\\
  & = bt_5a; \;\textrm{where} \;t_5= (xb)^{m_4} t_4 (xb)^{m_4}.
\end{align*}
Since $b \in T$, by above we have $ \;bt_5 \in T$. Say $bt_5= t_6$.
Then for $a \in S, \;\textrm{and} \;t_6 \in T, \;a \leq t_6a$ yields
that $a \leq at_7t_6$ for some $t_7 \in S$, by second  condition.
Therefore
\begin{align*}
a &\leq at_7b t_5\\
  & = at_7b (xb)^{m_4} t_4 (xb)^{m_4}\\
  & \leq at_7b(xb)^{m_4} t_4 (xb)^{m_4} t_4 (xb)^{m_4}a, \;\textrm{by} \;(\ref{eq5.3}).
\end{align*}
Clearly $t_7b(xb)^{m_4} t_4 (xb)^{m_4} t_4 (xb)^{m_4} \in T$, as $b
\in T$. Thus $a \in T$, which  shows that $T$ is an ideal of $S$.

Finally let $c,d\in T$. Then there is $v \in S \;\textrm{such that}
\;c\leq c(vc)^{n} \;\textrm{for all} \;n \in \mathbb{N} $. Now for
$vc, d \in S, \;\textrm{there exists} \;t_{8} \in S$ such that
$c\leq c(vc)^{m_{5}}t_{8}(vc)^{m_{5}}d  \;\textrm{for some} \;m_{5}
\in \mathbb{N}$. Since $c \in T, \;c(vc)^{m_{5}}t_{8}(vc)^{m_{5}}
\in T$. Hence $T$ is  left simple. Thus $T$ is left group like
ordered semigroup such that for every $a \in S \;\textrm{there is}
\;m \in \mathbb{N}, \;a^{m}\in T$. Hence $S$ is nil extension of a
left group like ordered  semigroup $T$.
\end{proof}

In the following result we provide an independent proof of Corollary
5.2 of \cite{Cao 2000}.
\begin{Theorem}\label{ne7}
An ordered semigroup $S$ is a nil extension of a group like ordered
semigroup if and only if for all $a, b \in S, \;there \;exists \;n
\in \mathbb{N} \;such \;that \;a^{n}\in (b^{n}Sb^{n}]$.
\end{Theorem}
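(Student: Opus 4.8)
\smallskip
\noindent\emph{Proof proposal.} The plan is to mirror the proof of Theorem~\ref{ne6}, replacing the one‑sided bookkeeping there by the two‑sided condition $a^{n}\in(b^{n}Sb^{n}]$. For the ``only if'' part, suppose $S$ is a nil extension of a group like ordered semigroup $K$ and let $a,b\in S$. Since $S/K$ is nil and $K$ is an ideal, there is $n\in\mathbb{N}$ with $a^{n},b^{n}\in K$. Applying the defining property of $K$ to the pair $a^{n},b^{n}$ gives $x\in K$ with $a^{n}\leq xb^{n}$, and applying it to the pair $x,b^{n}$ gives $w\in K$ with $x\leq b^{n}w$; hence $a^{n}\leq b^{n}wb^{n}\in b^{n}Sb^{n}$, i.e.\ $a^{n}\in(b^{n}Sb^{n}]$.

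For the converse, assume the stated condition and set $T=Reg_\leq(S)$. Taking $b=a$ in the hypothesis gives, for each $a\in S$, some $n$ with $a^{n}\leq a^{n}xa^{n}$, so $a^{n}\in T$; thus $T\neq\phi$ and $(\forall a\in S)(\exists n\in\mathbb{N})\,a^{n}\in T$. It will then suffice to prove that $T$ is an ideal of $S$ and a group like ordered semigroup, for then the characterization of nil extensions recalled in Section~2 shows $S$ to be a nil extension of the group like ordered semigroup $T$. The recurring device is linearization: if $c\in T$ then there is $u\in S$ with $c\leq c(uc)^{k}$ and $c\leq(cu)^{k}c$ for all $k\in\mathbb{N}$, so one may insert arbitrarily high powers of $uc$ or of $cu$ next to $c$ and then feed those powers into the hypothesis.

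Thus, for $s\in S$ and $a\in T$, choosing $u$ with $a\leq a(ua)^{m}$ for all $m$ and applying the hypothesis to the pair $ua,\,sa$ to get $(ua)^{m}\leq(sa)^{m}y(sa)^{m}$, one obtains $sa\leq sa(ua)^{m}\leq(sa)^{m+1}y(sa)^{m}=(sa)z(sa)$ for a suitable $z\in S$, so $sa\in T$; symmetrically $as\in T$. Hence $ST\subseteq T$ and $TS\subseteq T$. For downward closure, let $a\leq t$ with $t\in T$; choosing $u$ with $t\leq t(ut)^{m}$ for all $m$ and applying the hypothesis to the pair $ut,\,a$ to get $(ut)^{m}\leq a^{m}wa^{m}$, one gets $a\leq t(ut)^{m}\leq ta^{m}wa^{m}=t'a$ with $t'=ta^{m}wa^{m-1}\in TS\subseteq T$; iterating, $a\leq(t')^{k}a$ for every $k$, and applying the hypothesis to the pair $t',\,a$ to get $(t')^{k}\leq a^{k}w_{1}a^{k}$ for some $k$ yields $a\leq(t')^{k}a\leq a^{k}w_{1}a^{k+1}=a(a^{k-1}w_{1}a^{k})a$, so $a\in T$. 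Therefore $(T]=T$ and $T$ is an ideal of $S$. Finally, for $c,d\in T$, choosing $u$ with $c\leq c(uc)^{m}$ for all $m$ and applying the hypothesis to the pair $uc,\,d$ to get $(uc)^{m}\leq d^{m}yd^{m}$, one has $c\leq c(uc)^{m}\leq cd^{m}yd^{m}=(cd^{m}yd^{m-1})d$ with $cd^{m}yd^{m-1}\in T$ (as $T$ is an ideal); symmetrically, using $c\leq(cu)^{m}c$ and the pair $cu,\,d$, one gets $c\leq d(d^{m-1}yd^{m}c)$ with $d^{m-1}yd^{m}c\in T$. Hence $T$ is group like, completing the converse.

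The main obstacle will be the ideal step for $T$, and in particular downward closure: since the hypothesis only controls the power $a^{n}$ and only through the two‑sided product $b^{n}Sb^{n}$, the argument must first manufacture a genuinely linear inequality $a\leq t'a$ with $t'\in T$ and only then feed $t'$ back into the hypothesis, taking care that the exponents line up so that one really lands in $Reg_\leq(S)$ rather than in some $(a^{k}Sa^{k}]$. Also $ST,TS\subseteq T$ must be proved before downward closure, since the latter uses them; the group like step, by contrast, is then routine once $T$ is known to be an ideal.
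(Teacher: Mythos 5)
Your proof is correct and takes essentially the same approach as the paper: the forward direction is identical, and the converse likewise sets $T=Reg_\leq(S)$, proves $ST,TS\subseteq T$, downward closure, and the group like property via the same linearization trick of inserting high powers of $ua$ or $au$ and feeding them into the hypothesis. The only cosmetic difference is in the downward-closure step, where the paper sandwiches $b$ between $(bx)^m$ and $(xb)^n$ and bounds both by powers of $a$ in one application, while you first manufacture $a\leq t'a$ with $t'\in T$ and then apply the hypothesis a second time; both variants are valid.
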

\begin{proof}
Suppose that  $S$ is a nil extension of a group like ordered
semigroup $G$ and $a,b\in S$. Then there exists $n\in \mathbb{N}
\;\textrm{such that} \;a^{n}, b^{n} \in G$. Since $G$ is a group
like ordered semigroup, there exists $u \in G \;\textrm{such that}
\;a^{n}\leq b^{n}u$. Also for $u, b^{n} \in G \;\textrm{there
exists} \;x \in G \;\textrm{such that} \;u\leq xb^n$. This implies
$b^{n}u \leq b^{n}xb^{n}$. Thus $a^{n} \leq b^{n}xb^{n}\;
\textrm{and hence} \;a^n \in (b^{n}Sb^{n}]$.

Conversely, let us assume that given condition holds in $S$. Choose
$a\in S$. Then for some $m \in\mathbb{N} \;\textrm{and} \;x\in S,
\;a^{m}\leq a^{m}xa^{m}$. Thus $Reg_\leq (S) \neq \phi$. Say
$G=Reg_\leq (S)$. So for every $a \in S,\;\textrm{there exists}
\;m\in \mathbb{N}$ such that $a^{m} \in G$. Let us consider  $b \in
G \;\textrm{and} \;s \in S$. Then  for all $n\in \mathbb{N}$
\begin{equation}\label{eq5.4}
bs \leq bybs \leq (by)^{n}bs \;\textrm{for} \;y\in S.
\end{equation}
Using the given condition for $bs, by\in S$, we  obtain $(by)^{m}bs
\leq (bs)^{m}z (bs)^{m+1} \;\textrm{for} \;z \in S \;\textrm{and}
\;m \in \mathbb{N}$.  This yields that
 \begin{align*}
 bs &\leq (bs)^{m}z(bs)^{m+1}\\
    & \leq bstbs; \;\textrm{where} \;t=bs^{m-1}zbs^{m}.
\end{align*}
Thus  $bs \in G$. Similarly $sb \in G$.

Next let $a \in S$ and $b\in G$ be such that $a \leq b$. Since $b
\in G$ there exists $x \in S $ such that $b \leq bxb$ and hence $b
\leq (bx)^n b (xb)^n$ for all $n \in \mathbb{N}$, which implies that
\begin{align*}
a &\leq a^m (z_1a^mba^nz)a^n, \;\textrm{for some} \;m,n
\in\mathbb{N} \;\textrm{and}
\;z, z_1\in S\\
  &=ata, \;\textrm{where} \;t= a^{m-1} z_1a^mba^nza^{n-1}.
\end{align*}
So $a\in G$. Hence $G$ is an ideal of $S$.

Finally,  consider  $a,b \in G$. Then   there exists $x \in S$ such
that
$$a \leq (ax)^{n}a \;\textrm{for all} \;n \in \mathbb{N},$$ and so
by the   given condition it follows that $a \leq b^{m'}z'b^{m'}a$
for some $m' \in \mathbb{N} \;\textrm{and} \;z' \in S$. This gives
that $a \leq bu \;\textrm{for  some} \;u= b^{m'-1}z'b^{m'} a \in G$.
Similarly there is some $v \in S$ such that $a \leq vb$. This shows
that $G$ is a group like ordered semigroup. Hence $S$ is a nil
extension of a group like ordered semigroup $G$.
\end{proof}
\begin{Theorem}\label{ne8}
An ordered semigroup $S$ is a nil extension of a Clifford ordered
semigroup if and only if  for every  $x, a, y \in S, \;there
\;exists \;n \in \mathbb{N} \;such \;that \;xa^{n}y \in
(xa^{n}ySya^{n}x]\cap (ya^{n}xSxa^{n}y]$ and  $a\in S, \;b\in
Reg_\leq(S)$ such that $a\leq b$,  implies $a\in (Sab]$.
\end{Theorem}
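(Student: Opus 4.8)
The plan is to follow the template of the proofs of Theorems~\ref{ne6} and~\ref{ne7}, using Theorem~\ref{Clifford} to move between ``Clifford'' and ``complete semilattice of group like ordered semigroups''. For the ``only if'' part, suppose $S$ is a nil extension of a Clifford ordered semigroup $K$ and let $x,a,y\in S$. First choose $n$ with $a^{n}\in K$; since $K$ is an ideal, $xa^{n}y,\,ya^{n}x\in K$. The crucial point I would establish is that $xa^{n}y$ and $ya^{n}x$ lie in one and the same group like component of $K$. By Theorem~\ref{Clifford}(2) and (3), $K$ is the complete semilattice $Y$ of group like ordered semigroups $\{K_{\alpha}\}_{\alpha\in Y}$, and $\lc\,(=\rc)$ is the associated complete semilattice congruence; I would extend this to $S$ by noting that for $s\in S$ the $\lc$-class of $s^{m}$ (with $m$ large, so that $s^{m}\in K$) stabilises to a single class $\pi^{*}(s)$, and that $s\mapsto\pi^{*}(s)$ is a semilattice congruence on $S$ --- this uses only that $S$ is a nil extension of the ideal $K$. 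Then $\pi^{*}(xa^{n}y)=\pi^{*}(x)\pi^{*}(a^{n})\pi^{*}(y)=\pi^{*}(ya^{n}x)$, so $xa^{n}y$ and $ya^{n}x$ belong to the same $K_{\alpha}$; since a group like ordered semigroup is a single $\lc$-class, I would then, exactly as in the proof of Theorem~\ref{ne7}, apply the group like property twice inside $K_{\alpha}$ to obtain $xa^{n}y\leq xa^{n}y\,s\,ya^{n}x$ and $xa^{n}y\leq ya^{n}x\,s'\,xa^{n}y$ for suitable $s,s'$, which yields the first condition.

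For the second condition, take $b\in Reg_\leq(S)$ and $a\in S$ with $a\leq b$. From $b\leq b(zb)^{j}$ for all $j$ together with $(zb)^{j}\in K$ for $j$ large (so $b(zb)^{j}\in K$, $K$ being a downward closed ideal) we get $b\in K$, and hence $a\in K$. Completeness of $\lc$ on $K$ (Theorem~\ref{Clifford}(2)) then gives $a\ \lc\ ab$, so $a$ and $ab$ lie in a common $K_{\alpha}$, and the group like property inside $K_{\alpha}$ yields $a\leq p(ab)$ for some $p$, i.e.\ $a\in(Sab]$.

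For the ``if'' part, assume both conditions. Putting $x=y=a$ in the first condition gives $a^{n+2}\leq a^{n+2}sa^{n+2}$, so $Reg_\leq(S)\neq\phi$; set $T=Reg_\leq(S)$ and note that every element of $S$ has a power in $T$. I would then show $T$ is an ideal of $S$, the computations paralleling those in the proof of Theorem~\ref{ne6}: for $c\in T$ and $s\in S$ one starts from the regularity relation $c\leq c(vc)^{j}$ (all $j$) and feeds suitable triples into the first condition to obtain $sc,cs\in T$, while if $a\leq b$ with $b\in T$ the second condition gives $a\leq tab$, which combined with $b\leq b(vb)^{j}$ and a further use of the first condition forces $a\in T$. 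Finally I would show $T$ is a Clifford ordered semigroup: $T$ is regular, the middle regularity witness being takeable in $T$ because $T$ is an ideal, and for $p,q\in T$ one must establish $pq\in(qTp]$. For this I would feed a well chosen triple --- e.g.\ $(p,pq,q)$ --- into the first condition, obtaining some $n$ with $(pq)^{n+1}\leq(pq)^{n+1}s(qp)^{n+1}$, and then ``unfold'' the powers back down to $pq$ and $qp$ via the regularity relations $pq\leq(pq)w(pq)$ and $qp\leq(qp)w'(qp)$, absorbing the stray factors into $T$ since $T$ is an ideal. Then $T$ is Clifford, and as every element of $S$ has a power in $T$, $S$ is a nil extension of the Clifford ordered semigroup $T$.

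The two steps I expect to be the main obstacles are the following. In the ``only if'' part, the assertion that $xa^{n}y$ and $ya^{n}x$ lie in the same group like component: purely ``soft'' arguments only show that their $\lc$-classes coincide after multiplication by $\pi^{*}(x)$ and $\pi^{*}(y)$, so one genuinely needs that the semilattice decomposition of $K$ propagates to the nil extension $S$ (equivalently, that $\pi^{*}$ is a congruence on $S$), which is where associativity together with the group like structure of the components enters. In the ``if'' part, producing $pq\in(qTp]$: one must choose the triple so that, after unfolding powers by regularity and using that $T$ is an ideal, one lands exactly inside $(qTp]$, and this is where the full strength of the hypothesis $xa^{n}y\in(xa^{n}ySya^{n}x]\cap(ya^{n}xSxa^{n}y]$, including the second of the two intersected sets, is needed.
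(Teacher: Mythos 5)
Your overall architecture (set $T=Reg_\leq(S)$, show it is a Clifford ideal in which every element of $S$ has a power) matches the paper's, but the two steps you yourself flag as ``main obstacles'' are genuine gaps, and nothing in your write-up resolves them. In the ``only if'' direction, everything hinges on the lemma that the complete semilattice congruence $\lc$ on $K$ extends to a semilattice congruence $\pi^{*}$ on all of $S$ via $\pi^{*}(s)=[s^{m}]$. This is a substantive structure theorem in its own right, not a formal consequence of ``$S$ is a nil extension of the ideal $K$'': for instance, showing that $(st)^{M}$ and $s^{M}t^{M}$ lie in the same component requires a genuine computation, because $s$ and $t$ need not lie in $K$ while $\lc$ is only known to be a (complete semilattice) congruence \emph{on} $K$. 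You assert this lemma rather than prove it, so the first condition is not actually established. The paper sidesteps the issue entirely: since $K$ is an ideal, $z_{1}x$ and $a^{m}y$ already lie in $K$, so one applies the defining Clifford inclusion $uv\in(vKu]$ directly to the pair $(z_{1}x,\,a^{m}y)$ occurring in the regularity inequality $xa^{m}y\leq xa^{m}y\,z_{1}\,xa^{m}y$, and then two further such commutations inside $K$ rearrange the right-hand side into the form $xa^{m}y\,s\,ya^{m}x$. No decomposition into group like components, and no extension of the congruence to $S$, is needed.

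In the ``if'' direction, your proposed triple $(p,pq,q)$ does yield $(pq)^{n+1}\leq(pq)^{n+1}s(qp)^{n+1}$, but the ``unfold the powers back down'' step fails: regularity of $pq$ only gives $pq\leq pq(wpq)^{k}$ with the witness $w$ interleaved, not $pq\leq(pq)^{n+1}(\cdots)$, so the inequality you obtain does not bound $pq$ itself. The paper's computation instead starts from $ab\leq abrab\leq(abra)(bra)^{k}b$ for all $k$; the triple $(abra,\,bra,\,b)$ is chosen precisely so that the iterated regularity inequality already has the shape $xa^{k}y$, after which the hypothesis applies verbatim and the stray factors land in $T$. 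That is the computation you would need to reproduce. The remaining parts of your sketch (the second condition in the forward direction via $b\in K$ and the congruence property of $\lc$; the closure of $T$ under multiplication by $S$ and downward closure) do follow the paper's line and are fine in outline.
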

\begin{proof}
First suppose that  $S$ is   a nil extension of a Clifford ordered
semigroup $K$. Let $x,a,y \in S$. Then there is  $\;m \in
\mathbb{N}$ such that $a^{m}\in K$. Since $K$ is an ideal of $S$,
$xa^{m}y \in K$. Since  $K$ is a regular,  there exists $z_{1} \in
K$ such that
\begin{equation}\label{eq5.7}
xa^{m}y \leq xa^{m}yz_{1}xa^{m}y.
\end{equation}
Now   $z_{1}x,a^{m}y \in K$ implies that
\begin{equation}\label{eq5.8}
 z_{1}xa^{m}y \leq (a^{m}y) u_{1} (z_{1}x), \;\textrm{for some}
\;u_{1}\in K, \;\textrm{since} \;S \;\textrm{Clifford}.
\end{equation}
Similarly for $a^{m}, (yu_{1}z_{1}) \in K $ there is $u_2 \in S$
such that
\begin{equation}\label{eq5.9}
a^{m}(yu_{1}z_{1})\leq (yu_{1}z_{1})u_{2}a^{m}.
\end{equation}
Therefore
\begin{align*}
xa^{m}y &\leq xa^{m}yz_{1}xa^{m}y\\
        &\leq xa^my a^m (yu_1z_1)x, \;\textrm{by (\ref{eq5.8})}\\
        & \leq xa^{m}y^2u_{1}z_{1}u_{2}a^{m}x, \;\textrm{by
        (\ref{eq5.9})}.
\end{align*}
Thus
\begin{equation}\label{eq5.10}
xa^my \leq xa^{m}yz_{1}xa^{m}y^2u_{1}z_{1}u_{2}a^{m}x.
\end{equation}
 Also, for
$a^{m}y^{2}, u_{1} z_{1}u_{2}\in K \;\textrm{there exists}
\;u_{3}\in K $ such that $a^{m}y^2u_{1}z_{1}u_{2} \leq
u_{1}z_{1}u_{2}u_{3}a^{m}y^2$.  Then from (\ref{eq5.10}), we obtain
that
\begin{align*}
xa^{m}y  &\leq xa^{m}y(z_{1}xu_{1}z_{1}u_{2}u_{3}a^{m}y)ya^{m}x\\
          & \leq xa^{m}y s ya^{m}x; \;\textrm{where}  \;s= z_{1}xu_{1}z_{1}u_{2}u_{3}a^{m}y.
\end{align*}
Therefore $xa^{m}y \in (xa^{m}ySya^{m}x]$. Similarly  $xa^{m}y \in
(ya^{m}xSxa^{m}y]$.

Now $K \subseteq Reg_\leq(S)$ implies that  $Reg_\leq(S)\neq \phi$.
Consider $b\in Reg_\leq(S) \;\textrm{and} \;a\in S \;\textrm{such
that} \;a \leq b$. Since $b \in Reg_\leq(S), \;\textrm{there exists}
\;z \in S$ such that $b\leq (bz)^{n}b \;\textrm{for all} \;n\in
\mathbb{N}$. Since $S$ is a  nil extension of $K, \;\textrm{there
exists} \;n_{1}\in \mathbb{N}$ such that $(bz)^{n_{1}} \in K$. This
gives $(bz)^{n_{1}}b \in K$, which gives $b\in K$ and so $a\in K$,
since $K$ is an ideal of $S$. Since $K$ is a Clifford ordered
semigroup, by Theorem \ref{Clifford}(ii) $\lc$ is a congruence on
$S$. Since $a, b \in K$ we have $ \;a \lc ab$ and hence $a\subseteq
(Sab]$.

Conversely, let us assume that given conditions hold in $S$. Let
$a\in S$ be arbitrary. Then by the first  condition there exists $n
\in \mathbb{N}$ such that $a^{n+2}\leq a^{n+2}xa^{n+2},\\
\;\textrm{for some} \;x\in S$.  Thus $Reg_\leq(S)\neq \phi$. Say $T=
Reg_\leq(S)$. It is now clear that for each $a\in S, \;\textrm{there
exists } \; m\in \mathbb{N}$ such that $ a^m \in T$.

Let $s\in S$ and $x\in T$. Then  for all $n \in \mathbb{N}$ and for
some $\;t\in S, \;x  \leq (xt)^{n}x$ which  implies that $sx \leq
sx(tx)^{n-1}tx, \;\textrm{for all} \;n \in \mathbb{N}$. By first
condition there are  $ s_{1} \in S \;\textrm{and} \;m_{1} \in
\mathbb{N}$ such that $sx \leq
sx(tx)^{m_{1}}txs_{1}tx(tx)^{m_{1}}sx$ and thus $sx \leq sxpsx;
\;\textrm{where} \;p=(tx)^{m_{1}}txs_{1}tx(tx)^{m_{1}}$.  Also for
every  $n\in\mathbb{N}$,
\begin{align*}
xs &\leq x(tx)^{n}s\\
   & \leq xt(xt)^{n-1}xs.
\end{align*}
So there is  $m_2 \in \mathbb{N}$ such that
\begin{align*}
xs &\leq xs(xt)^{m_{2}}xts_{2}xt(xt)^{m_{2}}xs\\
   & \leq xsqxs; \;\textrm{where} \;q=(xt)^{m_{2}}xts_{2}xt(xt)^{m_{2}}.
\end{align*}
Thus $sx, sx \in T$.

To show  $T$, a Clifford ordered semigroup, choose   $a,b\in T$.
Then there is  $r\in S$ such that
\begin{align}\label{eq5.11}
ab &\leq abrab \nonumber\\
   & \leq (abra)(bra)^{n-1}b, \;\textrm{for all} \;n \in \mathbb{N}.
\end{align}
Now for  $abra,bra, b \in S$,  the  first condition yields that
$$abra(bra)^{m_{3}}b \leq b(bra)^{m_{3}}abrap_{1}abra(bra)^{m_{3}}b
\;\textrm{for some} \;p_{1}\in S \;\textrm{and} \;m_{3}\in
\mathbb{N}.$$ Therefore from (\ref{eq5.11})
\begin{align}\label{eq5.12}
ab &\leq b(bra)^{m_{3}}abrap_{1}abra(bra)^{m_{3}}b \nonumber \\
   & \leq b(bra)^{m_{3}}abrap_{1}abra(bra)^{m_{3}-1}brab \nonumber\\
   & \leq bgab; \;\textrm{where} \;g=(bra)^{m_{3}}abrap_{1}abra(bra)^{m_{3}-1}br\in T.
\end{align}
Similarly there are  $m_4 \in \mathbb{N}$ and $p_2 \in S$  such that
$ab \leq abra (bra)^{m_4} bp_2b (bra)^{m_4} abra$. So from
(\ref{eq5.12}),
$$ab \leq b(g abr a(bra)^{m_4}b p_2 b(bra)^{m_4}) abra= bua;
\;\textrm{where} \;u= g abr a(bra)^{m_4}b p_2 b(bra)^{m_4} abr \in
T.$$ Hence $T$ is Clifford ordered semigroup.

Now let $a\leq b$ for some $a\in S$ and $b \in T$. Then by the
second  condition, there is $z\in S$ such that  $a \leq zab,
\;\textrm{that is}$,
\begin{equation}\label{eq5.13}
a \leq zabtab \;\textrm{for some} \;t\in S.
\end{equation}
Since $T$ is Clifford ordered semigroup, for $zabt,ab\in T$ it
follows that $$zabtab\leq abp_{3}zabt \;\textrm{for some} \;p_3 \in
S.$$ Similarly  for $bp_{3}za, bt \in T$, we have  $bp_{3}zabt \leq
btp_{4}bp_{3}za \;\textrm{for some} \;p_4 \in S$.

The last two inequalities together with (\ref{eq5.13})  yields that
$a\leq aha, \;\textrm{where} \;h=btp_{4}bp_{3}z$. Thus $a \in T$ and
so $T$ is an ideal of $S$. Hence $S$ is a nil extension of a
Clifford ordered semigroup $T$.
\end{proof}
\begin{Theorem}\label{ne9}
An ordered semigroup $S$ is a nil extension of a left Clifford
ordered semigroup if and only if  for every $x, a, y \in S, \;there
\;exists \;n \in \mathbb{N} \;such \;that \;xa^{n}y \in
(xa^{n}ySya^{n}x]\cap (xa^{n}ySxa^{n}y]$ and  $a\in S, \;b\in
Reg_\leq(S)$ such that $a\leq b$ implies  $a\leq azab \;for \;some
\;z\in S$.
\end{Theorem}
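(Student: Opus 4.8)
The plan is to mirror the proof of Theorem~\ref{ne8}, the present statement being its left-handed analogue: condition~(1) keeps the first intersectand $(xa^{n}y\,S\,ya^{n}x]$ but replaces the second by the weaker $(xa^{n}y\,S\,xa^{n}y]$, and condition~(2) takes the ``left'' shape $a\le azab$; accordingly, wherever the proof of Theorem~\ref{ne8} invokes the Clifford identity $cd\in(dSc]$ I would substitute a combined use of the left-Clifford identity $cd\in(Sc]$ and the commutation $cd\ \lc\ cd^{-1}$ --- more precisely $cd\ \lc\ dc$, valid for $c,d$ in the relevant Clifford-type subsemigroup because by Theorem~\ref{left Clifford}(i) $\lc$ is there a semilattice congruence.

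\textbf{Necessity.} Let $S$ be a nil extension of a left Clifford ordered semigroup $K$ and take $x,a,y\in S$. Choose $m$ with $a^{m}\in K$; then $xa^{m}y\in K$ since $K$ is an ideal, and regularity of $K$ gives $xa^{m}y\le xa^{m}y\,z_{1}\,xa^{m}y$ for some $z_{1}\in K$ --- this already witnesses $xa^{m}y\in(xa^{m}y\,S\,xa^{m}y]$. For $xa^{m}y\in(xa^{m}y\,S\,ya^{m}x]$ I would use Theorem~\ref{left Clifford}: $K$ is a complete semilattice of left group like semigroups, and (enlarging $m$ if necessary) $xa^{m}y$ and $ya^{m}x$ lie in a common component $K_{\gamma}$; being left group like, $K_{\gamma}$ is left simple, so $xa^{m}y\le r\,(ya^{m}x)$ for some $r$, and then regularity of $xa^{m}y$ yields $xa^{m}y\le xa^{m}y\,w\,xa^{m}y\le xa^{m}y\,(wr)\,ya^{m}x$. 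For the second clause, if $b\in Reg_{\leq}(S)$ then, exactly as in the proof of Theorem~\ref{ne8}, $b\le b(zb)^{n}$ for all $n$ and $(zb)^{n_{1}}\in K$ for some $n_{1}$ force $b\in K$, hence $a\in K$; completeness of $\lc$ on $K$ gives $a\ \lc\ ab$, so $a$ and $ab$ sit in one component $K_{\gamma}$, and applying Lemma~\ref{LGO} inside $K_{\gamma}$ to the pair $a,ab$ gives $a\le a\,s\,a\,(ab)=a(sa)ab$, i.e.\ $a\le azab$ with $z=sa$.

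\textbf{Sufficiency.} Assume the two conditions. Condition~(1) applied with $x=y=a$ yields $a^{n+2}\le a^{n+2}ua^{n+2}$ for some $n,u$, so $Reg_{\leq}(S)\neq\phi$; put $T=Reg_{\leq}(S)$, and note every element of $S$ has a power in $T$. (i) \emph{$T$ absorbs products with $S$:} for $s\in S$, $x\in T$, write $x\le(xt)^{n}x$, hence $sx\le sx(tx)^{n}$ for all $n$, and apply condition~(1) with the substitution $x\mapsto sx$, $a\mapsto tx$, $y\mapsto tx$; for a suitable $N$ its first intersectand gives $sx\le sx(tx)^{N+1}\le sx(tx)^{N+1}\,p\,(tx)^{N+1}sx=sx\bigl((tx)^{N+1}p(tx)^{N+1}\bigr)sx$, so $sx\in T$; symmetrically $xs\in T$. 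Hence $T$ is a subsemigroup, is regular with witnesses in $T$ (from $c\le cwc$ one gets $c\le c(wcw)c$ with $wcw\in T$), and $ST\cup TS\subseteq T$. (ii) \emph{$T$ is left Clifford:} for $a,b\in T$ we have $ab\in T$, so $ab\le ab\,r\,ab$, whence $ab\le abra(bra)^{m}b$ for every $m$; applying condition~(1) with $x\mapsto abra$, $a\mapsto bra$, $y\mapsto b$ and a suitable $m$, the first intersectand gives $abra(bra)^{m}b\le abra(bra)^{m}b\,p\,b(bra)^{m}abra$, and since the word $abra$ ends in $a$ the right-hand side has the shape $s\,a$ with $s\in T$; thus $ab\le sa$, i.e.\ $ab\in(Ta]$, so $T$ is a left Clifford ordered semigroup. (iii) \emph{$T$ is an ideal:} $ST\cup TS\subseteq T$ is already known, and if $a\le b$ with $b\in T$ then condition~(2) gives $a\le azab$ for some $z$; here $azab\in T$, so $azab\le azab\,w\,azab$, and writing $azabwazab=a\,(zabwa)(zab)$ and using the left Cliffordness of $T$ in the form $(zabwa)(zab)\le t\,(zabwa)$ with $t\in T$, we get $a\le a\,t\,z\,a\,b\,w\,a=a(tzabw)a$, so $a\in T$. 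Hence $T$ is an ideal of $S$, it is left Clifford, and $S$ is a nil extension of $T$.

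\textbf{Main obstacle.} The genuinely delicate point is the necessity of $xa^{m}y\in(xa^{m}y\,S\,ya^{m}x]$: the cleanest route needs the verification that $xa^{m}y$ and $ya^{m}x$ fall into one and the same left group like component of $K$, and if one instead imitates the inequality chain from the proof of Theorem~\ref{ne8} one must be careful, for the left-Clifford identity $cd\in(Sc]$ by itself does not reposition $d$ the way $cd\in(dSc]$ does --- the $\lc$-commutation is really needed in tandem. In the sufficiency direction the difficulty is only organizational: at each appeal to condition~(1) one must pick the substitution into $x,a,y$ that makes the resulting word carry the required prefix/suffix, and then execute the long rotations --- most visibly the one in step~(iii). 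Nothing here goes conceptually beyond the pattern already set by Theorem~\ref{ne8}.
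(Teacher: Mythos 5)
Your overall architecture matches the paper's (isolate $T=Reg_\leq(S)$, show it is left Clifford and an ideal), but three steps do not go through as written. First, in the necessity of $xa^{m}y\in(xa^{m}ySya^{m}x]$ you place $xa^{m}y$ and $ya^{m}x$ in a common left group like component of $K$ by invoking the semilattice congruence; but $x$ and $y$ themselves need not lie in $K$, so you cannot commute them past each other inside $K$'s congruence (splitting $xa^{m}y=(xa^{k})(a^{k}y)$ only gets you to $a^{k}yxa^{k}$, and the block $yx$ versus $xy$ cannot be exchanged). This is exactly the point you flag as delicate, and the decomposition route does not resolve it; the paper instead rotates the word directly, applying the left Clifford inequality $cd\le z c$ (for $c,d\in K$) three times to the factors $z_{1}x,\,a^{m}y$, etc., until the suffix $ya^{m}x$ appears.

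Second, in sufficiency step (i) the claim ``symmetrically $xs\in T$'' fails: unlike the fully symmetric hypothesis of Theorem~\ref{ne8}, condition (1) here is not invariant under reversing words, and from $xs\le x(tx)^{N}s$ the first intersectand only yields $xs\le x(tx)^{N}s\,p\,s(tx)^{N}x$, which has the form $u\,(xs)\,v$ or $w\cdot x$, never $xs\cdot w\cdot xs$. So at that stage you only have $ST\subseteq T$, not $TS\subseteq T$; the paper accordingly postpones $xs\in T$ to the very last step, where it bounds $xs$ above by an element ending in $x$ (hence lying in $ST\subseteq T$) and then uses the already-established downward closure of $T$. Since your step (iii) quotes ``$ST\cup TS\subseteq T$ is already known,'' this ordering matters. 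Third, in step (iii) itself you apply left Cliffordness of $T$ to the pair $(zabwa,\,zab)$, but $zabwa$ ends in the letter $a$, which is precisely the element not yet known to lie in $T$; the property $cd\in(Tc]$ requires both $c,d\in T$. The paper's fix is to iterate $a\le azab$ into $a\le a(za)^{n}b^{n}$, choose $n$ with $(za)^{n}\in T$, and apply left Cliffordness to $(za)^{n},b^{n}\in T$ to get $(za)^{n}b^{n}\le t(za)^{n}\in Sa$, whence $a\le asa$. (A smaller slip of the same kind occurs in step (ii): with your substitution the multiplier in $ab\le s\,a$ ends in $r$, not in $a$, so $s\in T$ is unjustified; choosing $x\mapsto a$, $y\mapsto b$ in condition (1), as the paper does, makes the multiplier end in $a$ and hence lie in $ST\subseteq T$.)
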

\begin{proof}
Let $S$ be  a nil extension of a left Clifford ordered semigroup
$K$. Choose $x,a,y \in S$. Then there exists $\;m \in \mathbb{N}$
such that $a^m \in K$. Since $K$ is an ideal of $S$, $xa^my \in K$.
Also the regularity of $K$  yields that
\begin{equation}\label{eq5.14}
xa^m y \leq xa^my z_1 xa^my, \;\textrm{for some} z_1 \in K.
\end{equation}
Since $K$ is a left Clifford ordered semigroup and  $z_1x, a^my \in
K$, by Theorem \ref{left Clifford} it follows that  $z_1x a^my \leq
z_2(z_1x)$ for some $z_2 \in K$. Therefore
\begin{align}\label{eq5.15}
xa^m y &\leq xa^my z_1 xa^my \nonumber\\
       &\leq xa^my z_1 xa^my  z_1 xa^my \nonumber\\
       &\leq xa^my z_1 xa^my z_2(z_1x).
\end{align}
Similarly, for $a^m, yz_2z_1 \in K$ there is $z_3 \in K$ such that
\begin{equation}\label{eq5.16}
a^myz_2z_1 \leq z_3a^m,
\end{equation}
and for $a^m, z_1xz_3 \in K$ there is $z_4 \in K$ such that
\begin{equation}\label{eq5.17}
a^myz_1xz_3 \leq z_4 a^my.
\end{equation}
Thus from (\ref{eq5.15}) we obtain that
\begin{align*}
xa^my &\leq xa^my z_1 xa^my z_2z_1x \nonumber\\
      &\leq xa^my z_1 xa^m z_3 a^mx , \;\textrm{from (\ref{eq5.16}}) \nonumber\\
      &\leq xa^my z_4 a^mya^mx, \;\textrm{from (\ref{eq5.17})}.
\end{align*}
Hence $xa^{m}y \in (xa^{m}ySya^{m}x]$ and so $xa^{m}y \in
(xa^{m}ySya^{m}x]\cap (xa^{m}ySxa^{m}y]$, from (\ref{eq5.14}).

To show the second condition  choose $a \in S$ and $b \in
Reg_\leq(S)$ be such that $a \leq b$. By the regularity of $b$
yields that $b \leq b(tb)^n$ for some $t \in S$ and for all $n \in
\mathbb{N}$. Then there is $r \in \mathbb{N}$ such that  $(tb)^r \in
K$. Since $K$ ia an ideal  $b(tb)^r \in K$ and so $b \in K$. Thus $a
\in K$. Since $K$ is left Clifford ordered semigroup, $\lc$ is
congruence on $S$, by Theorem \ref{left Clifford}(i). Thus $a \lc
ab$. Then $a, ab $ are in Theorem \ref{LGO}, $a \leq azab$ for some
$z \in S$. This proves  the necessary condition.

Conversely, suppose  that given conditions hold  in $S$. Let $a\in
S$. Then by the first  condition  there exists $n\in\mathbb{N}$ such
that $a^{n+2}\leq a^{n+2}xa^{n+2}, \;\textrm{for some} \;x\in S$.
Thus $Reg_\leq (S)\neq \phi$. Say $T= Reg_\leq(S)$. Now for each
$a\in S, \;\textrm{there exists} \;m \in \mathbb{N}$ such that $a^m
\in T$. Let $s\in S$ and $x \in T$. Then for all $n\in \mathbb{N}
\;\textrm{and for some} \;t\in S, \;x \leq (xt)^{n}x$. This implies
 $sx \leq sx(tx)^{n-1}tx, \;\textrm{for all} \;n \in
\mathbb{N}$.

Then by the first  condition there are $s_{1} \in S \;\textrm{and}
\;m_{1} \in \mathbb{N}$ such that
\begin{align*}
sx &\leq sx(tx)^{m_{1}}txs_{1}tx(tx)^{m_{1}}sx\\
   &\leq sxpsx; \;\textrm{where} \;p=(tx)^{m_{1}}txs_{1}tx(tx)^{m_{1}}.
\end{align*}
Therefore $ sx  \in T$.

We now  show that $T$ is  a left Clifford ordered semigroup. For
this let us assume that $a,b\in T$. Then there is $t_1 \in S$ such
that
$$
ab \leq ab t_1 ab \leq a(bt_1a)^n bt_1 ab \;\textrm{for all} \;n \in
\mathbb{N}.$$ Then by first condition  there are $t_2 \in S
\;\textrm{and} \;m' \in \mathbb{N}$ such that
\begin{align*}
ab &\leq a (bt_1a)^{m'}bt_1 ab \\
   &\leq a (bt_1a)^{m'} bt_1ab t_2 bt_1ab(bt_1a)^{m'} a
\end{align*}
Therefore  $ab \leq t'_{1}a; \;\textrm{where} \;t'_{1}=  a
(bt_1a)^{m'} bt_1ab t_2 bt_1ab(bt_1a)^{m'}.$ Since  $a \in T$ we
have $t_1 ^{'} \in T$. Hence $T$ is left Clifford ordered semigroup.

Next let  $ a\in S \;\textrm{and} \;b \in T$ such that $a \leq b$.
Using second condition we have $a \leq azab$, for some $z \in S$.
Then for all $n \in \mathbb{N,}  \;a \leq a(za)^n b^n$.  So for some
$m'' \in \mathbb{N}, \;(za)^{m''} \in T$.  Since $T$ is a left
Clifford, $(za)^{m''} b^{m''} \in (Ta]$. So $a \in (aSa]$, that is
$a \in T$.

Finally to show $T$, an ideal of $S$ we need only to show that $xs
\in T$. The regularity of $x$ yields that  $xs \leq x (tx)^n s$ for
all $n \in \mathbb{N}$. Also for some $l \in \mathbb{N}$,  $x (tx)^l
s \in T$. Then $xs \leq x(tx)^l s$ implies that $xs \in T$ by above.
Thus $T$ is an ideal of $S$. Hence $S$ is a nil extension of a left
Clifford ordered semigroup $T$.
\end{proof}

\bibliographystyle{plain}

\end{document}